
\documentclass[11pt]{article}
\usepackage{amssymb,amsmath,amsfonts}
\usepackage{tikz}
\usepackage{graphics}
\usepackage{graphicx}

\DeclareMathOperator{\real}{Re}

\def\eqref#1{$(\ref{#1})$}

\newenvironment{proof}{\noindent {\em {Proof}}.}{$\square$
\medskip}
\parskip          =1.5mm           
\oddsidemargin    =1cm \evensidemargin   =1cm \textwidth =16cm
\textheight       =23.5cm \headheight       =0cm \topskip =0cm
\topmargin        =-.6cm \hoffset          =-1cm
\newtheorem{theorem}{Theorem}[section]
\newtheorem{corollary}[theorem]{Corollary}
\newtheorem{lemma}[theorem]{Lemma}
\newtheorem{remark}[theorem]{Remark}
\newtheorem{proposition}[theorem]{Proposition}

\newtheorem{example}[theorem]{Example}
\newtheorem{question}[theorem]{Question}
\newtheorem{problem}[theorem]{Problem}

\newcommand{\disk}{\mathbb{D}}

\newcommand{\bt}[1]{\begin{theorem}\label{#1}}
\newcommand{\bc}[1]{\begin{corollary}\label{#1}}
\newcommand{\bl}[1]{\begin{lemma}\label{#1}}
\newcommand{\bp}[1]{\begin{proposition}\label{#1}}
\newcommand{\be}[1]{\begin{example}\rm\label{#1}}
\newcommand{\bq}[1]{\begin{question}\rm\label{#1}}
\newcommand{\bprob}[1]{\begin{problem}\rm\label{#1}}
\newcommand{\beq}[1]{\begin{eqnarray}\label{#1}}
\newcommand{\br}[1]{\begin{remark}\rm\label{#1}}

\newcommand{\el}{\end{lemma}}
\newcommand{\ep}{\end{proposition}}
\newcommand{\ee}{\end{example}}
\newcommand{\eq}{\end{question}}
\newcommand{\eprob}{\end{problem}}
\newcommand{\eeq}{\end{eqnarray}}
\newcommand{\ed}{\end{definition}}
\newcommand{\et}{\end{theorem}}
\newcommand{\ec}{\end{corollary}}
\newcommand{\er}{\end{remark}}

\title{\bf Non-radial weights and polynomial approximation in spaces of analytic functions}
\author{Ali Abkar
\vspace{0.5in}\\
Department of Pure Mathematics, Faculty of Science,\\
Imam Khomeini International University, Qazvin 34149, Iran\\
\\
\\
\texttt{Email:~abkar@sci.ikiu.ac.ir}\\
 }
\date{}
\begin{document}
\maketitle \textbf{Abstract.} We study sufficient conditions on weight functions under which norm
 approximations by analytic polynomials are possible. The weights we study include radial, non-radial, and angular weights.\\

\noindent\textbf{Keywords}:{ Approximation in norm, Bergman space, non-radial weight, angular weight}\\

\noindent \textbf{MSC2020}: 30H25, 30H20, 46E15, 46E20\\
\textbf{}{}

\section{Introduction}

Let $X$ be a Banach space of analytic functions on some bounded domain $G$ in the complex plane. It is an important question to know if $X$ contains the polynomials; and if the answer is affirmative, then the question pops up if the polynomials are dense in $X$. In classical complex analysis, if $K$ is a compact set whose complement is connected, and if $f$ is continuous on $K$ and analytic in the interior of $K$, then $f$ can be uniformly approximated by polynomials on $K$ (Mergelyan's Theorem, \cite{rud}, p. 390). But in spaces of analytic functions, the continuity assumption on the boundary of the domain, as well as the compactness assumptions are missing, so that the problem needs a new treatment in every concrete space. \par
Let $0<p<\infty$ and $G$ be a bounded simply connected domain that is conformally equivalent to the unit disk $\disk=\{z: |z|<1\}$. The Bergman space
$A^p(G)$ consists of analytic functions in $G$ such that
$$\|f\|^p_{A^p(G)}=\int_G|f(z)|^pdA(z)$$
is finite; here $dA(z)=dxdy$ is the usual planar measure on $G$. It is well-known that for $1\le p<\infty$ the space $A^p(G)$
is a Banach space; and for $0<p<1$ it is a complete metric space. \par
Putting aside the uniform approximation of functions by polynomials,
the first result on the approximation by polynomials in the spaces of square ($p$-th power) area integrable functions was recorded by Torsten Carleman in his 1923 paper who proved the result for Jordan regions \cite{car}. At the time the phrase "Bergman space" was not yet coined!
This result was then extended by O. J. Farrell to Carath\'{e}odory regions (see \cite{far1}, \cite{far2}).
Farrell proved that for each $f\in A^p(G)$, there is a sequence of polynomials $p_n$ such that $p_n\to f$ uniformly on compact subsets of $G$, and
in the mean (or in norm):
$$\|f-p_n\|_{A^p(G)}\to 0,\quad n\to\infty.$$
Here, we should remark that the Taylor polynomials of a given function do not necessarily converge to the same function in norm; this statement is true if $1<p<\infty$ (see \cite{pzz}, and \cite{zhu}).\par
In this paper we are mainly concerned with approximation in weighted Bergman spaces.
A weight function is a positive integrable function $w:G\to \mathbb{R}$. The weighted Bergman space $A^p(G, wdA)=A_w^p(G)$ consists of analytic functions in $G$ such that
$$\int_G|f(z)|^pw(z)dA(z)<\infty.$$
 It is well-known that if the weight function is radial, that is, if $w(z)=w(|z|)$, then the polynomials are dense in the weighted Bergman space $A^p(G, wdA)$. This result is attributed to Sergey N. Mergelyan who proved that for $p=2$, and $G=\disk$, the dilatations $f_r(z)=f(rz)$ converge to $f$ in norm as $r\to 1^-$ (see \cite{mer}). This entails the density of polynomials in the weighted Bergman space.
If the weight function is not radial, the following few simple cases are known to behave well; that is, the associated spaces allow the polynomial approximation.
For example,\\
1) if $w$ is bounded in $\disk$, then we have
$$\int_\disk |f(z)|^pw(z)dA(z)\le C\int_\disk |f(z)|^pdA(z)$$
and since the polynomials are dense in $A^p(\disk,dA)$, it follows that they are dense in $A^p(\disk,wdA)$.\\
2) if there is a constant $C$ such that (for $d\mu(z)=w(z)dA(z)$)
$$\int_\disk |f(z)|^pd\mu(z)\le C\int_\disk |f(z)|^pdA(z),$$
then the polynomials are dense in $A^p(\disk,d\mu)$. Such measures are called Carleson measures for the Bergman space.\\
Approximation in general weighted Bergman spaces is much more complicated and remains still unsolved. Apart from the pioneering work of Mergelyan, and Farrell, credits should also be given to Lars Inge Hedberg for his investigation on weighted approximation by polynomials on compact subsets of the complex plane (see \cite{hed1}, and \cite{hed2}). \par
In recent years, the current author considered a class of non-radial weights that are super-biharmonic (their bi-laplacian is non-negative) and satisfy the growth condition
\begin{equation}\label{cond-weight}
\lim_{r\to 1}\int_\mathbb{T} w(rz)d\sigma(z)=0,
\end{equation}
on the boundary of the unit disk $\mathbb{T}=\partial \disk$ (see \cite{abkar1}, \cite{abkar2}). We proved that for this class of weights the polynomial approximation is possible in Bergman spaces, and provided some applications in the invariant subspaces of the Bergman space. The same result also holds for the Dirichlet spaces, and the analytic Besov spaces.\par
In this paper we prove that if the weight $w$ satisfies the condition
$$r^kw\left (\frac{z}{r}\right )\le Cw(z),\quad |z|<r,\, r_0\le r<1,\, r_0\in (0,1),$$
where $C$ is some constant and $k$ is some non-negative integer, then the polynomials are dense in weighted Bergman space $A^p(\disk, wdA)$.
The motivation comes from our earlier work to show that for super-biharmonic weights satisfying the condition (\ref{cond-weight}), the function
$$r\mapsto rw(\frac{z}{r})$$
is increasing in $r$ if $|z|<r$ (see \cite{abkar1}). This suggests that to achieve the polynomial approximation, one has to consider the behavior of the function $rw(z/r)$. The results we obtain, extend our previous results on this topic.\par
Next we study the angular weights on the unit disk. These are weights that
depend just on $\theta$; in contrast to radial weights that depend only on $r=|z|$.
For instance, we consider
$$w(z)=\left (1-\frac{\theta}{2\pi}\right ),\quad z=re^{i\theta}.$$
This is an analog for the radial weight $w(z)=1-|z|$ in the unit disk.\\
We shall then investigate weights of the form
$$w(z)=\omega(r)v(\theta),$$
where $\omega$ is a radial weight and $v$ is an angular weight. We shall see that in all this cases, the polynomial approximation is possible.
\par
Finally, in the last section we discuss the similar problem in general domains (see \S 3).
We close this section by mentioning that polynomial approximation is closely connected to the invariant subspaces of the space in question (see \cite{abkar1} and \cite{abkar2}); as well as to the behavior of composition operators defined on the underlying domain, see for instance \cite{chz}.
This issue also pops up in the decomposition of functions in the weighted Bergman spaces; see a recent paper of Korhonen and R\"{a}tty\"{a} where the authors assume that the weight is chosen in such a way that the polynomials are dense in the weighted Bergman space $A^p_w$ (\cite{kor}, Theorem 4).


\section{Approximation in the unit disk}
It is a classical theorem that if the weight function on the unit disk is radial, that is $w(z)=w(|z|)$, then the polynomials are dense in the weighted Bergman space $A^2(\disk, wdA)$. This was proved by S. Mergelyan \cite{mer} under the assumption that
$$\int_0^1rw(r)dr<\infty.$$
The same result is of course true for the Bergman spaces $A^p(\disk, wdA)$, for $0<p<\infty$.

We assume now that $w$ is an arbitrary weight function (not necessarily radial) for which
$$\int_\disk w(z)dA(z)<\infty.$$
We shall see that under some mild condition on $w$, the polynomials are dense in the weighted Bergman spaces $A^p(\disk, wdA)$.
We begin with the following theorem.
\begin{theorem}\label{limsup}
Let $0<p<\infty$ and $d\mu(z)=w(z)dA(z)$ be a finite positive measure on the unit disk such that for some non-negative
integer $k$ and an $r_0\in (0,1)$ we have
\begin{equation}\label{con-main}
r^kw\left (\frac{z}{r}\right )\le Cw(z),\quad |z|<r,\, r_0\le r<1.
\end{equation}
Then the polynomials are dense in the weighted Bergman space $A^p(\disk,d\mu)$.
\end{theorem}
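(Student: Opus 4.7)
The strategy is the classical two-step approximation: I would first approximate $f \in A^p(\disk, wdA)$ by its dilations $f_r(z) = f(rz)$, and then, since each $f_r$ extends analytically across the unit circle, approximate $f_r$ by its Taylor polynomials. The hypothesis (\ref{con-main}) is precisely tailored to make the first step work.

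The computational engine is the change of variables $\zeta = rz$, which converts the dilation norm into
$$\|f_r\|^p_{A^p(\disk,wdA)} = \int_{|\zeta|<r}|f(\zeta)|^p w(\zeta/r)\,\frac{dA(\zeta)}{r^{2}}.$$
Rewriting (\ref{con-main}) as $w(\zeta/r)\le C r^{-k}w(\zeta)$ for $|\zeta|<r$ and $r_0\le r<1$, I would obtain the uniform bound $\|f_r\|\le C'\|f\|$ with $C'$ depending only on $C$, $k$, $r_0$. Applied to an annulus instead of all of $\disk$, the same computation yields the tail estimate I really need.

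The heart of the argument is showing $f_r\to f$ in norm. Given $\epsilon>0$, absolute continuity of $|f|^p w\,dA$ produces $\rho_1<1$ with $\int_{|\zeta|>\rho_1}|f|^p w\,dA<\epsilon$. Choosing $\rho\in(\rho_1,1)$ and restricting attention to $r$ so close to $1$ that $r\rho>\rho_1$, the annular version of the substitution above gives
$$\int_{\rho\le|z|<1}|f_r|^p w\,dA \le \frac{C}{r_0^{k+2}}\int_{r\rho\le|\zeta|<1}|f|^p w\,dA < \frac{C\epsilon}{r_0^{k+2}}.$$
The tail of $|f|^p w\,dA$ over $\{|z|>\rho\}$ is itself $<\epsilon$, so both $f$ and $f_r$ contribute a small amount there, uniformly for such $r$. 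Over the compact set $\{|z|\le\rho\}$ the function $f$ is uniformly continuous, hence $f_r\to f$ uniformly, and finiteness of $\mu=w\,dA$ converts this into $L^p(\mu)$-convergence on the compact piece. A triangle-type inequality (the genuine triangle inequality for $p\ge 1$, subadditivity of $|\cdot|^p$ for $0<p<1$) combines these estimates into $\|f-f_r\|\to 0$ as $r\to 1^{-}$.

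Finally, each $f_r$ is analytic on $|z|<1/r$, so its Taylor polynomials converge to $f_r$ uniformly on $\overline{\disk}$, hence in $L^p(\mu)$ since $\mu(\disk)<\infty$. Approximating $f$ by $f_r$ for $r$ near $1$ and then $f_r$ by its Taylor sections finishes the proof. I expect the annular estimate to be the main obstacle: it is precisely the place where both the factor $r^k$ and the specific form $w(z/r)$ in (\ref{con-main}) must be exploited to transfer control of the boundary tail of $f$ into control of the boundary tail of $f_r$ uniformly in $r$.
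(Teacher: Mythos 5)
Your proof is correct, but it follows a genuinely different route from the paper's. Both proofs share the same skeleton (dilate, use the change of variables $\zeta=rz$ together with (\ref{con-main}) to control $\|f_r\|$, then approximate $f_r$ by polynomials since it is analytic past the boundary), but the way you pass from the norm bound to $\|f-f_r\|\to 0$ differs. The paper argues softly: it claims $\limsup_{r\to1^-}\int_\disk|f_r|^p\,d\mu\le\int_\disk|f|^p\,d\mu$, combines this with Fatou's lemma to get $\int_E|f_r|^p\,d\mu\to\int_E|f|^p\,d\mu$ for every measurable $E$, and then invokes Egorov's theorem plus absolute continuity of the integral to split $\disk$ into a set of uniform convergence and a set of small measure. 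You instead make the splitting explicit and geometric: a closed disk $\{|z|\le\rho\}$, on which uniform convergence $f_r\to f$ and finiteness of $\mu$ do the work, and an annulus $\{\rho\le|z|<1\}$, on which your substitution transfers the small tail of $\int|f|^pw\,dA$ into a small tail for $f_r$, uniformly in $r$ near $1$. Your version is more elementary (no Egorov) and, importantly, more robust: the paper's displayed computation actually yields only $\limsup\int|f_r|^p\,d\mu\le C\int|f|^p\,d\mu$ with the constant $C$ from (\ref{con-main}), so the stated inequality with constant $1$ --- which is what the Fatou step genuinely needs --- is only justified as written when $C=1$. Your annular estimate tolerates the constant $C$ harmlessly, since it multiplies a quantity already made small, so your argument covers the general case cleanly. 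The only cosmetic differences in the last step are that you use Taylor sections of $f_r$ where the paper invokes Mergelyan's theorem (both are fine, as $f_r$ is analytic on $\{|z|<1/r\}$), and that for $0<p<1$ you correctly flag the need to work with the $p$-th power of the norm as a subadditive metric.
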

\begin{proof}
We show that for each $\epsilon>0$, there is a polynomial $Q$ such that $\|f-Q\|_{A^p(\disk,d\mu)}<\epsilon.$
Let $f\in A^p(\disk, d\mu)$, and consider the dilatations $f_r(z)=f(rz)$ for $0\le r<1$, and $z\in\disk$.
By a change of variables, we see that for $r$ sufficiently close to $1$, we have
\begin{align*}\int_\disk |f_r(z)|^pd\mu(z)&=
\frac{1}{r^{2+k}}\int_{r\disk} |f(z)|^p r^k w\left(\frac{z}{r} \right)dA(z)\\
&\le \frac{C}{r^{2+k}}\int_{r\disk} |f(z)|^p w(z)dA(z).
\end{align*}
We now use the dominated convergence theorem to conclude that
$$\limsup_{r\to 1^{-}} \int_\mathbb{D}|f_r|^pd\mu(z)\le \int_\mathbb{D}|f|^pd\mu(z).$$
This inequality together with Fatou's lemma implies that for each measurable subset $E\subseteq \disk$,
\begin{equation}\label{eq-lim-E}
\lim_{r\to 1^{-}} \int_{E}|f_r|^pd\mu(z)= \int_{E}|f|^pd\mu(z).
\end{equation}
By the continuity property of integral on its domain, there is a $\delta>0$ such that
$$\mu(E)<\delta \implies \int_{E}|f|^pd\mu(z)<\epsilon.$$
According to Egorov's theorem, there is a subset $D_0$ of the unit disk such that $\mu(\disk-D_0)<\delta$ and $f_r\to f$ uniformly on $D_0$.
We now write
\begin{align}\label{2max}
\int_\disk |f_r-f|^p d\mu &= \int_{D_0} |f_r-f|^p d\mu +\int_{\disk\setminus D_0} |f_r-f|^p d\mu  \nonumber \\
&\le \int_{D_0} |f_r-f|^p d\mu +2^p\int_{\disk\setminus D_0} (|f_r|^p+|f|^p) d\mu.
\end{align}
Note that due to the uniform convergence on $D_0$, the first integral on the right-hand side of (\ref{2max}) tends to zero as $r\to 1^-$.
As for the second integral, we use (\ref{eq-lim-E}) to obtain
$$\limsup_{r\to 1^{-}} \int_\mathbb{D}|f_r-f|^pd\mu(z)\le 2^{p+1}\int_{\disk\setminus D_0} |f|^p d\mu \le 2^{p+1}\epsilon.$$
This latter inequality entails
\begin{equation}\label{ffrto0}
\lim_{r\to 1^{-}}\int_\mathbb{D}|f_r-f|^pd\mu(z)=0.
\end{equation}
Given $\epsilon>0$, there is $0<r_0<1$ such that $\|f-f_{r_0}\|_{A^p(\disk,d\mu)}<\epsilon$.
Since $f_{r_0}$ is analytic in a disk bigger than the unit disk, we may use Mergelyan's theorem to approximate $f_{r_0}$ uniformly on $\disk$ by a polynomial $Q$. Since $\mu$ is a finite positive measure on the unit disk, we obtain
$$\| f_{r_0}-Q\|^p_{A^p(\disk,d\mu)}=\int_\disk |f_{r_0}-Q|^p d\mu \le \epsilon^p\int_\disk d\mu(z):=\epsilon^pC,$$
from which it follows that
$$\| f-Q\|_{A^p(\disk,d\mu)}\le \epsilon(1+C^{1/p}).$$
This proves the theorem.
\end{proof}

\begin{remark}
The condition (\ref{con-main}) can be weakened in the following way: if there is a non-negative function $g$ that dominates $r^k w(z/r)$ and such that
$$\int_\disk |f(z)|^p g(z)dA(z)<\infty.$$
Again the dominated convergence theorem can be applied to deduce the key inequality
\begin{equation*}
\limsup_{r\to 1^{-}} \int_{E}|f_r|^pd\mu(z)\le \int_{E}|f|^pd\mu(z).
\end{equation*}
\end{remark}

\begin{theorem}\label{increasing}
Let $0<p<\infty$ and $d\mu(z)=w(z)dA(z)$ be a finite positive measure on the unit disk such that for some non-negative integer $k$ and each $z$ with $|z|<r$, the function
$$r\mapsto r^kw\left (\frac{z}{r}\right )$$
is increasing. Then the polynomials are dense in the weighted Bergman space $A^p(\disk,wdA)$.
\end{theorem}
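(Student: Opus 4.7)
The plan is to derive Theorem \ref{increasing} as an immediate corollary of the previous Theorem \ref{limsup}. The hypothesis is that for every fixed $z\in\disk$, the function
$$\rho_z(r):=r^kw\left(\frac{z}{r}\right)$$
is non-decreasing in $r$ on the interval $(|z|,1]$. Since the unit disk is the natural domain of $w$, the value at the right endpoint $r=1$ is exactly $\rho_z(1)=w(z)$. Monotonicity then yields, for every $r$ with $|z|<r<1$,
$$r^kw\left(\frac{z}{r}\right)=\rho_z(r)\le \rho_z(1)=w(z).$$
This is precisely condition $(\ref{con-main})$ of Theorem \ref{limsup} with constant $C=1$, valid for any threshold $r_0\in(0,1)$.

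Having established this estimate, the plan is then to invoke Theorem \ref{limsup} directly: for any $f\in A^p(\disk,d\mu)$ and any $\epsilon>0$, a polynomial $Q$ exists with $\|f-Q\|_{A^p(\disk,d\mu)}<\epsilon$. No further work is needed, because the dilatation argument (change of variables, dominated convergence, Egorov's theorem, and Mergelyan's theorem) has already been carried out in the proof of Theorem \ref{limsup}, and the monotone hypothesis supplies exactly the pointwise bound that argument requires.

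Since the proof is essentially one line of observation, no serious obstacle arises. The only point that needs care is the interpretation of the hypothesis: the monotone behavior of $r\mapsto r^kw(z/r)$ must hold up to and including $r=1$ so that the comparison $\rho_z(r)\le \rho_z(1)=w(z)$ can be used. If one instead reads the hypothesis as monotonicity only on the open interval $(|z|,1)$, the same conclusion follows by passing to a suitable limit or by combining monotonicity with any finite growth bound near $r=1$, again reducing to the setting of Theorem \ref{limsup}. This monotone formulation is natural because it generalizes the motivating observation from \cite{abkar1} that for super-biharmonic weights satisfying $(\ref{cond-weight})$, the map $r\mapsto rw(z/r)$ is increasing for $|z|<r$.
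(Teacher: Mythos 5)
Your proof is correct and rests on the same underlying observation as the paper's: monotonicity of $\rho_z(r)=r^kw(z/r)$ up to $r=1$ is what controls the dilated integrals. The packaging, however, is genuinely different. You evaluate the monotone function at its right endpoint to obtain the pointwise bound $r^kw(z/r)\le\rho_z(1)=w(z)$, i.e.\ condition (\ref{con-main}) with $C=1$, and then invoke Theorem \ref{limsup} as a black box; the paper instead repeats the change of variables and applies the monotone convergence theorem to the integrand $r^kw(z/r)$ so as to re-derive the key inequality $\limsup_{r\to1^-}\int_\disk|f_r|^p\,d\mu\le\int_\disk|f|^p\,d\mu$ directly, after which the Egorov/Mergelyan argument is rerun. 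Your reduction is the cleaner logical route, and it has the incidental benefit that with $C=1$ the passage from the change-of-variables estimate to that $\limsup$ inequality is immediate, whereas for a general constant $C>1$ that step in Theorem \ref{limsup} requires more justification than the paper supplies. You are also right to flag the endpoint issue: identifying $\rho_z(1)$ with $w(z)$ (equivalently, $\lim_{r\to1^-}r^kw(z/r)=w(z)$) is needed in both arguments, since the paper's monotone-convergence step relies on exactly the same identification of the pointwise limit; so your caveat does not point to a gap in your proof that the paper avoids.
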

\begin{proof}
We will verify that each function $f\in A^p(\disk,d\mu)$ satisfies the condition of Theorem \ref{limsup}. By a change of variables, it is easy to see that
\begin{equation*}\int_\disk |f_r(z)|^pd\mu(z)=
\frac{1}{r^{2+k}}\int_{r\disk} |f(z)|^p r^k w\left(\frac{z}{r} \right)dA(z).
\end{equation*}
We now use the monotone convergence theorem to conclude that
$$\limsup_{r\to 1^-}\int_\disk |f_r(z)|^pd\mu(z)=\int_\disk |f(z)|^pd\mu(z),$$
from which the result follows.
\end{proof}

\begin{example} (a).~
Let $w(z)=(1-|z|^2)^\alpha$ where $\alpha>0$. We then have
$$\int_\disk |f_r(z)|^pd\mu(z)=\int_\disk |f_r(z)|^p (1-|z|^2)^\alpha dA(z).$$
Substitute $z$ by $z/r$ in the right to get (for $z\in r\disk$):
$$\int_\disk |f_r(z)|^pd\mu(z)=\frac{1}{r^2}\int_{r\disk} |f(z)|^p \left(\frac{r^2-|z|^2}{r^2}\right)^\alpha dA(z).$$
Note that $$r\mapsto \frac{r^2-|z|^2}{r^2}$$
is increasing in $r$, so that by the monotone convergence theorem
\begin{align*}\limsup_{r\to 1^-}\int_\disk |f_r(z)|^pd\mu(z)&=\lim_{r\to 1^-}
\frac{1}{r^2}\int_{r\disk} |f(z)|^p \left(\frac{r^2-|z|^2}{r^2}\right)^\alpha dA(z)\\
&=\int_\disk |f(z)|^pd\mu(z).
\end{align*}
(b).~ Note also that for $\alpha>-1$, and the standard weight $w(z)=(\alpha+1)(1-|z|^2)^\alpha$ we have
$$r^kw(\frac{z}{r})=(\alpha+1)r^{k-2\alpha}(r^2-|z|^2)^\alpha\le w(z),$$
provided that $k-2\alpha >1$. Now Theorem \ref{limsup} can be applied.\\
(c).~ Consider the non-radial weight $w(z)=|\real (z)|=|x|$. We have $rw(z/r)=r|x/r|=|x|=w(z)$, so that the above Theorems apply.\\
(d).~
It is easily seen that the Gaussian weight $w(z)=\exp(-|z|^2)$ satisfies the condition of the above theorem for $k=0$. The same is true for non-radial
weight $w(z)=\exp(-|\real (z)|^2)$.
We should further remark that in some instances the function $r\mapsto w(z/r)$ is not increasing, but we may find some $k$ for which $r\mapsto r^kw(z/r)$
is increasing. For example, if we take $w(z)=\exp(|z|)$, then
$$\frac{d}{dr}w\left (\frac{z}{r}\right )=-\frac{|z|}{r^2}\exp\left (\frac{|z|}{r}\right )<0$$
while
$$\frac{d}{dr}\left[rw(\frac{z}{r})\right]=\left (1-\frac{|z|}{r}\right )\exp\left (\frac{|z|}{r}\right )>0,\quad |z|<r.$$
Note also that $r^2e^{|z|/r}$ is increasing since
$$\frac{d}{dr}(r^2e^{|z|/r})=2re^{|z|/r}+r^2(\frac{-1}{r^2}e^{|z|/r})=(2r-1)e^{|z|/r}>0$$
for $r>1/2$.
\end{example}

Recall that a weight function $w$ on the unit disk
is said to be super-biharmonic if $\Delta^2 w(z)\ge 0$ where $\Delta$ stands for the laplacian
$$\Delta =\frac{1}{4}\left (\frac{\partial^2}{\partial x^2}+\frac{\partial^2}{\partial y^2}\right).$$

We have already proved that every super-biharmonic weight function satisfying
\begin{equation*}\label{condition2}\lim_{r\to 1}\int_\mathbb{T} w(rz)d\sigma(z)=0,\end{equation*}
has the property that $r\mapsto rw(z/r)$ is increasing in $r$ if $|z|<r$ (see \cite{abkar1} and \cite{abkar2}). This shows that the class of weight functions satisfying the condition (\ref {con-main}) contains the class of super-biharmonic weight functions satisfying the condition
(\ref{cond-weight}). Therefore, Theorem \ref{increasing} extends some of our earlier results announced in \cite{abkar1} and \cite{abkar2}.
\begin{corollary}[\cite{abkar1}] Let $w$ be a super-biharmonic weight function satisfying
\begin{equation*}\label{condition2}\lim_{r\to 1}\int_\mathbb{T} w(rz)d\sigma(z)=0,\end{equation*}
where $d\sigma$ is the normalised arc-length measure on $\mathbb{T}=\partial \disk$.
Then the polynomials are dense in $A^p(\disk, wdA)$.
\end{corollary}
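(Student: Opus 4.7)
The plan is to obtain this corollary as an immediate application of Theorem \ref{increasing} with the choice $k=1$. The key ingredient is not proved afresh: the paragraph preceding the corollary already records the fact (established in \cite{abkar1} and \cite{abkar2}) that for any super-biharmonic weight $w$ satisfying $\lim_{r\to 1}\int_\mathbb{T} w(r\zeta)\, d\sigma(\zeta)=0$, the function
$$r\mapsto r\, w\!\left(\frac{z}{r}\right)$$
is non-decreasing in $r$ for every fixed $z$ with $|z|<r$. Once this monotonicity is in hand, the hypothesis of Theorem \ref{increasing} with $k=1$ is met, and density of the polynomials in $A^p(\disk, w\, dA)$ follows.

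The first step I would carry out explicitly is to check that $d\mu = w\, dA$ is a finite measure on $\disk$, so that Theorem \ref{increasing} genuinely applies. This follows by writing $\int_\disk w\, dA = \int_0^1 r\,\bigl(\int_\mathbb{T} w(r\zeta)\, d\sigma(\zeta)\bigr)\, dr$ and combining the boundary vanishing hypothesis with the standard fact that the mean $r\mapsto \int_\mathbb{T} w(r\zeta)\, d\sigma(\zeta)$ is well-controlled for super-biharmonic $w$ on the disk (it is essentially increasing plus a remainder). The second step is to quote the monotonicity statement from \cite{abkar1}. The third and final step is to invoke Theorem \ref{increasing}.

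If one wished to sketch the monotonicity step without citing the earlier papers, the approach would be the classical one: any super-biharmonic function on $\disk$ with vanishing boundary mean admits a Riesz-type decomposition as an integral of the biharmonic Green kernel $\Gamma(z,\zeta)$ of $\disk$ against the non-negative measure $\Delta^2 w\, dA$, plus a harmonic term killed by the boundary condition. Differentiating $r\, w(z/r)$ in $r$ under the integral sign then reduces the problem to checking the pointwise inequality $\frac{\partial}{\partial r}\bigl[r\, \Gamma(z/r,\zeta)\bigr]\ge 0$ for $|z|<r<1$, which is a direct calculation using the explicit formula for $\Gamma$. This kernel-level monotonicity is the technical heart of the argument and would be the main obstacle if the result had to be proved from scratch; since it is already available in the cited works, the corollary reduces to a one-line deduction from Theorem \ref{increasing}.
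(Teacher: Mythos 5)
Your proposal is correct and follows essentially the same route as the paper: the text immediately preceding the corollary cites \cite{abkar1} and \cite{abkar2} for the fact that $r\mapsto rw(z/r)$ is increasing for super-biharmonic $w$ with vanishing boundary means, and then deduces the corollary as the case $k=1$ of Theorem \ref{increasing}. Your additional remarks (checking finiteness of $w\,dA$ and sketching the Riesz-decomposition proof of the monotonicity) are consistent elaborations of steps the paper leaves to the citations.
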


\noindent{\bf Dirichlet and Besov spaces}\\
The results of the preceding section can be modified to incorporate the Dirichlet spaces, as well as the analytic Besov spaces.
The weighted Dirichlet space consists of analytic functions in the unit disk for which
$$\int_{\disk}|f^\prime(z)|^2w(z) dA(z)<\infty.$$
The norm of a function in the weighted Dirichlet space is given by
$$\|f\|_{\mathcal{D}^2_w}=\left( |f(0)|^2+\int_{\disk}|f^\prime(z)|^2w(z) dA(z)\right )^{1/2}.$$
We may also consider the weighted Dirichlet-type spaces $\mathcal{D}^p_w,\, 1<p<\infty$, consisting of analytic functions in the unit disk for which
$$\|f\|_{\mathcal{D}^p_w}=\left( |f(0)|^p+\int_{\disk}|f^\prime(z)|^pw(z)dA(z)\right )^{1/p}$$
is finite.
The weighted Dirichlet-type spaces are particular cases of a scale of Banach spaces of analytic functions, namely, the weighted analytic Besov spaces. Indeed, the weighted analytic Besov space $\mathcal{B}_w^p$ consists of analytic functions $f$ in the unit disk for which the integral
$$\int_{\disk}(1-|z|^2)^{p-2}|f^\prime(z)|^p w(z)dA(z)$$
is finite.
The norm of a function in the weighted Besov space is given by
\begin{equation*}
\| f\|_{\mathcal{B}^p_w}=\left (|f(0)|^p+\int_{\disk}(1-|z|^2)^{p-2}|f^\prime(z)|^p w(z) dA(z)\right )^{1/p}.
\end{equation*}
\begin{theorem}\label{limsup-dirichlet}
Let $0<p<\infty$ and $d\mu(z)=w(z)dA(z)$ be a finite positive measure on the unit disk such that $w$ satisfies the condition (\ref{con-main}).
Then the polynomials are dense in $\mathcal{D}^p(\disk, wdA)$.
\end{theorem}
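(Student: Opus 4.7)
The plan is to reduce the density result for the weighted Dirichlet space to the Bergman space case already proved in Theorem \ref{limsup}, applied to the derivative. If $f\in \mathcal{D}^p_w$, then by definition $f'\in A^p(\disk,wdA)$, so the key limits (\ref{eq-lim-E}) and (\ref{ffrto0}) obtained in the proof of Theorem \ref{limsup} are available for $g:=f'$.

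First, I would show that the dilations $f_r(z)=f(rz)$ converge to $f$ in the Dirichlet norm as $r\to 1^-$. Since $(f-f_r)(0)=0$, this amounts to
$$\int_\disk |f'(z)-r f'(rz)|^p w(z)\,dA(z)\longrightarrow 0.$$
A genuine nuisance here is that $f_r'(z)=rf'(rz)$, not $f'(rz)$, so one cannot simply quote Theorem \ref{limsup} for $f'$. I would handle this by writing
$$f'(z)-r f'(rz) = \bigl(f'(z)-f'(rz)\bigr) + (1-r)\,f'(rz)$$
and applying $|a+b|^p \le C_p(|a|^p+|b|^p)$ (with $C_p=2^{p-1}$ for $p\ge 1$ and $C_p=1$ for $0<p<1$). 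The integral of the first summand tends to $0$ by (\ref{ffrto0}) applied to $g=f'$, and the integral of the second equals $(1-r)^p \int_\disk |f'(rz)|^p w(z)\,dA(z)$, whose bracketed factor converges to $\int_\disk |f'|^p w\,dA$ by (\ref{eq-lim-E}) with $E=\disk$ and so stays bounded; the prefactor $(1-r)^p$ then forces the product to $0$.

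Second, for $r$ close to $1$, $f_r$ is analytic in the disk of radius $1/r>1$, so $f_r'$ is analytic in a neighborhood of $\overline{\disk}$ and in particular continuous on $\overline{\disk}$. Mergelyan's theorem supplies a polynomial $P$ with $\|f_r'-P\|_{\infty,\overline{\disk}}$ arbitrarily small. Setting $Q(z):=f_r(0)+\int_0^z P(\zeta)\,d\zeta$ produces a polynomial with $Q(0)=f_r(0)$ and $Q'=P$, whence
$$\|f_r-Q\|_{\mathcal{D}^p_w}^p=\int_\disk |f_r'-P|^p w\,dA \le \|f_r'-P\|_{\infty,\overline{\disk}}^p\,\mu(\disk),$$
which can be made as small as we wish. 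Combining the two steps through the triangle inequality (respectively, the $p$-subadditivity) of the Dirichlet quasi-norm finishes the proof.

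The main obstacle I anticipate is precisely the mismatch between $(f_r)'$ and $(f')_r$ produced by the chain rule; once the splitting above is in place, the argument is a direct transcription of the Bergman space proof applied to $f'$, with Mergelyan playing the same final role as before. The same strategy should apply without essential changes to the weighted Besov space $\mathcal{B}^p_w$, using the weight $(1-|z|^2)^{p-2}w(z)$ in place of $w(z)$.
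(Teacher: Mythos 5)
Your argument is correct and follows the same overall strategy as the paper: establish $\|f_r-f\|_{\mathcal{D}^p_w}\to 0$ via the dilations, then approximate $f_{r}$ by integrating a uniform polynomial approximant of $f_{r}'$. The one point of genuine divergence is how the chain-rule factor is handled. You split $f'-f_r'=(f'-(f')_r)+(1-r)(f')_r$, apply the Bergman-space limit (\ref{ffrto0}) as a black box to $g=f'$ (legitimate, since $f\in\mathcal{D}^p_w$ means $f'\in A^p(\disk,wdA)$ and $w$ satisfies (\ref{con-main})), and kill the second term because $(1-r)^p$ multiplies a quantity that stays bounded by (\ref{eq-lim-E}) with $E=\disk$. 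The paper instead absorbs the factor $r^p$ coming from $f_r'(z)=rf'(rz)$ into the change of variables (producing the constant $r^{p-k-2}$), proves the limsup inequality directly for $\int|f_r'|^pw\,dA$, and then reruns the Egorov argument with $f'$ and $f_r'$ in place of $f$ and $f_r$. Your reduction buys reuse of Theorem \ref{limsup} essentially verbatim and isolates the only new issue --- the mismatch between $(f_r)'$ and $(f')_r$ --- which the paper passes over with ``just replace $f$ and $f_r$ by $f'$ and $f_r'$''; you also make explicit the antiderivative step ($Q(0)=f_r(0)$, $Q'=P$) that the paper leaves implicit. Both routes are sound.
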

\begin{proof}
As for approximation, it is enough to work with the following semi-norm (the constant term does not play any role in approximation):
$$\|f\|^p_{\mathcal{D}^p_w}=\int_{\disk}|f^\prime(z)|^pw(z)dA(z).$$
We also note that
$$\|f_r -f\|^p_{\mathcal{D}^p_w}=\int_{\disk}|f_r^\prime(z)-f^\prime(z)|^p w(z)dA(z).$$
Making a change of variables, we obtain
\begin{align*}\int_\disk |f^\prime_r(z)|^pw(z)dA(z)&=
r^{p-k-2}\int_{r\disk} |f^\prime(z)|^p r^kw\left(\frac{z}{r} \right)dA(z)\\
&\le Cr^{p-k-2}\int_{r\disk} |f^\prime(z)|^p w(z)dA(z)\\
&\le C_{p}\int_{r\disk} |f^\prime(z)|^p w(z)dA(z),
\end{align*}
where $C_{p}$ is a constant depending on $p$. Therefore by the dominated convergence theorem,
$$\limsup_{r\to 1}\int_\disk |f^\prime_r(z)|^pw(z)dA(z)\le \int_\disk |f^\prime(z)|^pw(z)dA(z).$$
The rest of the proof goes in the same way as in the proof of Theorem \ref{limsup}; just replace $f$ and $f_r$, by $f^\prime$ and $f^\prime_r$, respectively, to obtain
$$\lim_{r\to 1}\|f_r -f\|^p_{\mathcal{D}^p_w}=\lim_{r\to 1}\int_{\disk}|f_r^\prime(z)-f^\prime(z)|^p w(z)dA(z)=0.$$
\end{proof}

The above theorem extends a recent result of the author (for details, see \cite{abkar3}).
We now state a similar statement for the analytic Besov spaces.
\begin{theorem}\label{limsup-dirichlet}
Let $2<p<\infty$ and $d\mu(z)=w(z)dA(z)$ be a finite positive measure on the unit disk
such that $w$ satisfies the condition (\ref{con-main}).
Then the polynomials are dense in $\mathcal{B}^p(\disk, wdA)$.
\end{theorem}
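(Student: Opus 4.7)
The plan is to mirror the strategy used for the Dirichlet space: replace $f$ by $f'$ in the argument of Theorem \ref{limsup}, and absorb the extra Besov weight $(1-|z|^2)^{p-2}$ during the change of variables. For the purposes of approximation one may drop the $|f(0)|^p$ constant term and work with the semi-norm
$$\|f\|^p := \int_\disk (1-|z|^2)^{p-2} |f'(z)|^p w(z)\, dA(z),$$
setting $f_r(z) = f(rz)$, so $f_r'(z) = r f'(rz)$. The first goal is to establish the limsup inequality
$$\limsup_{r \to 1^-} \int_\disk (1-|z|^2)^{p-2} |f_r'(z)|^p w(z)\, dA(z) \le C \int_\disk (1-|z|^2)^{p-2} |f'(z)|^p w(z)\, dA(z).$$

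The central computation is a change of variables $\zeta = rz$ which, after inserting $f_r'(z) = r f'(rz)$, turns the left-hand side into
$$r^{\,2-p} \int_{r\disk} (r^2 - |\zeta|^2)^{p-2} |f'(\zeta)|^p w(\zeta/r)\, dA(\zeta).$$
Here is where the assumption $p > 2$ does the real work: since $p-2 > 0$ and $r \le 1$, one has $(r^2 - |\zeta|^2)^{p-2} \le (1-|\zeta|^2)^{p-2}$ on $r\disk$. Combining this with the weight condition $r^k w(\zeta/r) \le C w(\zeta)$ for $|\zeta|<r$ and $r \ge r_0$ yields
$$\int_\disk (1-|z|^2)^{p-2} |f_r'(z)|^p w(z)\, dA(z) \le C r^{\,2-p-k} \int_{r\disk} (1-|\zeta|^2)^{p-2} |f'(\zeta)|^p w(\zeta)\, dA(\zeta),$$
and because $r^{\,2-p-k} \to 1$ as $r \to 1^-$ while the integrand on the right is dominated by the Besov integrand for $f$, the dominated convergence theorem closes out the limsup bound.

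With this estimate in hand, I would reproduce the Fatou--Egorov mechanism from the proof of Theorem \ref{limsup} applied to the derivatives $f_r'$ and $f'$ against the finite measure $d\nu(z) = (1-|z|^2)^{p-2} w(z)\, dA(z)$; this delivers $\lim_{r \to 1^-} \|f - f_r\| = 0$. Fixing $r_0$ close to $1$ with $\|f - f_{r_0}\| < \varepsilon$, I would then approximate $f_{r_0}$ by a polynomial $Q$: since $f_{r_0}$ is analytic on a disk strictly larger than $\overline{\disk}$, Mergelyan's theorem gives uniform approximation of $f_{r_0}$ on $\overline{\disk}$, and by Cauchy's formula the same polynomials satisfy $Q' \to f_{r_0}'$ uniformly on $\overline{\disk}$, so $\|f_{r_0} - Q\|$ can be made arbitrarily small.

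The main obstacle is precisely the $(1-|z|^2)^{p-2}$ factor under dilation: when $p > 2$ the exponent is positive and the pointwise inequality $(r^2 - |\zeta|^2)^{p-2} \le (1-|\zeta|^2)^{p-2}$ runs in the direction needed to dominate the dilated integrand by the original one. The regime $1 \le p < 2$, where this factor blows up near the boundary and the key pointwise inequality reverses, does not seem to be reachable by this strategy and would require a different treatment.
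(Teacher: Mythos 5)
Your proposal is correct and follows essentially the same route as the paper: pass to the semi-norm, change variables, absorb the Besov factor using $p>2$, deduce the limsup inequality, and then run the Fatou--Egorov mechanism of Theorem \ref{limsup} with $f'$ in place of $f$. The only cosmetic differences are that you use the pointwise bound $(r^2-|\zeta|^2)^{p-2}\le(1-|\zeta|^2)^{p-2}$ where the paper phrases the same fact as monotonicity of $r\mapsto\left((r^2-|z|^2)/r^2\right)^{p-2}$ and invokes monotone convergence, and that you spell out the final approximation step ($Q'\to f_{r_0}'$ uniformly via Cauchy's formula, best applied on a slightly larger disk where $f_{r_0}$ is still analytic), which the paper dismisses as routine.
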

\begin{proof}
It suffices to verify that for each
$f\in \mathcal{B}^p(\disk, d\mu)$ we have
$$\limsup_{r\to 1^{-}}\int_\mathbb{D}(1-|z|^2)^{p-2}|f^\prime_r(z)|^pd\mu(z)\le \int_\mathbb{D}(1-|z|^2)^{p-2}|f^\prime(z)|^pd\mu(z).$$
The rest of the proof goes in the same way as in the proof of Theorem \ref{limsup}. To this end, we work with
$$\|f\|^p_{\mathcal{B}^p_w}=\int_{\disk}(1-|z|^2)^{p-2}|f^\prime(z)|^pw(z)dA(z).$$
Therefore, by a change of variables we have
\begin{align*}\|f^\prime_r\|^p_{\mathcal{B}^p_w}&=\int_{\disk}(1-|z|^2)^{p-2}|f_r^\prime(z)|^pw(z)dA(z)\\
&= r^{p-k-2}\int_{r\disk}\left (\frac{r^2-|z|^2}{r^2}\right )^{p-2}|f^\prime(z)|^pr^kw\left (\frac{z}{r}\right )dA(z)\\
&\le C_{p}\int_{r\disk}\left (\frac{r^2-|z|^2}{r^2}\right )^{p-2}|f^\prime(z)|^pw(z)dA(z),
\end{align*}
where $C_p$ is a constant depending on $p$. But the function
$$r\mapsto \left (\frac{r^2-|z|^2}{r^2}\right )^{p-2}$$
is increasing in $r$ when $p\ge 2$. Hence the monotone convergence theorem can be applied to the last integral, and then another application of the dominated convergence theorem yields
$$\limsup_{r\to 1^{-}}\int_\mathbb{D}(1-|z|^2)^{p-2}|f^\prime_r(z)|^pd\mu(z)\le \int_\mathbb{D}(1-|z|^2)^{p-2}|f^\prime(z)|^pd\mu(z).$$
As we already mentioned, the rest of the proof is routine.
\end{proof}

The preceding theorem generalizes a result already announced by the author in \cite{abkar4} where we proved that if $w$ is super-biharmonic,
and satisfies the growth condition (\ref{cond-weight}), then the polynomials are dense in analytic Besov spaces.

\noindent{\bf Angular weights}\\
In contrast to radial weights, let us assume that the weight function depends only on the argument of $z$; that is,
$w(re^{i\theta})=w(\theta)$. We may call such weights {\it angular weights}. For example,
$$w(z)=w(re^{i\theta})=(4\pi^2-\theta^2)^\alpha,\quad 0\le \theta <2\pi , \, \alpha>0,$$
is an angular weight that satisfies
$$\int_0^{2\pi}w(\theta)d\theta <\infty.$$
It seems that the study of angular weights was overlooked in the literature. Here we provide some statements on the approximation by polynomials in this weighted spaces.
\begin{proposition}\label{angular}
Let $w$ be an angular weight function satisfying
$$\int_0^{2\pi}w(\theta)d\theta <\infty.$$
Then the Taylor polynomials of each function in $A^2(\disk, wdA)$ tend to the same function in norm. In particular,
the polynomials are dense in $A^2(\disk, wdA)$.
\end{proposition}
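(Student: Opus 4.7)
The plan is to combine two separate convergence statements via a three-$\epsilon$ argument: first, the radial dilations $f_\rho(z)=f(\rho z)$ converge to $f$ in $A^2(\disk,wdA)$ as $\rho\to 1^-$; second, for each $\rho<1$ the Taylor polynomials $S_N(f_\rho)$, which coincide with the dilated polynomials $(S_N f)_\rho$, converge uniformly on $\overline{\disk}$ to $f_\rho$.

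For the first convergence, I would use that $w$ depends only on $\arg z$: the change of variable $\zeta=\rho z$ gives $\|f_\rho\|^2_{A^2(wdA)} \le \rho^{-2}\|f\|^2_{A^2(wdA)}$, and combined with the pointwise convergence $f_\rho(z)\to f(z)$ on $\disk$ and the absolute continuity of the finite measure $|f|^2 w\,dA$, Vitali's convergence theorem yields $\|f-f_\rho\|_{A^2(wdA)}\to 0$. For the second, since $f_\rho$ extends analytically to $\{|z|<1/\rho\}$, its Taylor polynomials converge uniformly on $\overline{\disk}$; the finiteness of $wdA$ (which only requires $w\in L^1$) then promotes this uniform convergence to $A^2(wdA)$-convergence.

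To go from these two facts to $S_N f\to f$ itself, I would use
\[
\|f-S_N f\|_{A^2(wdA)} \le \|f-f_\rho\|_{A^2(wdA)} + \|f_\rho - S_N(f_\rho)\|_{A^2(wdA)} + \|S_N(f-f_\rho)\|_{A^2(wdA)}.
\]
The first two terms are controlled by the previous steps by choosing $\rho$ close to $1$ and then $N$ large. The third term is the main obstacle: it demands a uniform bound $\|S_N h\|_{A^2(wdA)} \le C\|h\|_{A^2(wdA)}$ on the Taylor projection, independent of $N$. For radial weights this is immediate from orthogonality of $\{z^n\}$, but for angular weights a direct Fubini--Parseval calculation gives
\[
\|S_N h\|^2_{A^2(wdA)} = \sum_{0\le n,m\le N}\frac{h_n\bar h_m W_{m-n}}{n+m+2}, \qquad W_k = \int_0^{2\pi}w(\theta)e^{-ik\theta}\,d\theta,
\]
and I would extract the required estimate using $|W_k|\le W_0$ together with the positive semidefiniteness of the Toeplitz section $(W_{m-n})_{0\le n,m\le N}$. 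This last step is the delicate one, and it is where the purely angular structure of $w$ genuinely enters.
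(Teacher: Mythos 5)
Your reduction is sound and your instinct that the last step is ``the delicate one'' is exactly right --- but that step cannot be carried out, because the uniform bound $\|S_N h\|_{A^2(wdA)}\le C\|h\|_{A^2(wdA)}$ is false for general angular weights, and with it the proposition itself fails. The two ingredients you propose, $|W_k|\le W_0$ and positive semidefiniteness of the Toeplitz sections $(W_{m-n})$, hold for \emph{every} finite positive measure $w(\theta)\,d\theta$, so they cannot by themselves yield the estimate; this is the same obstruction as for partial sums on $H^2(w\,d\theta)$, where uniform boundedness forces the Muckenhoupt $A_2$ condition. A concrete counterexample in the present setting: take $w(\theta)=2-2\cos\theta=|1-e^{i\theta}|^2$ (nonnegative, integrable, vanishing only on one radius --- no worse than the weights $|\real z|$ or $(4\pi^2-\theta^2)^\alpha$ used elsewhere in the paper) and $f(z)=\sum_{n\ge0}\sqrt{n+1}\,z^n$. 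Here $W_0=4\pi$, $W_{\pm1}=-2\pi$, $W_k=0$ for $|k|\ge2$, and your Fubini--Parseval identity applied to the dilates gives
$$\|f_\rho\|^2_{A^2(wdA)}=2\pi\sum_{n\ge0}\rho^{2n}-2\pi\sum_{n\ge0}\sqrt{1-\frac{1}{(2n+3)^2}}\,\rho^{2n+1}=\frac{2\pi}{1+\rho}+2\pi\sum_{n\ge0}\delta_n\rho^{2n+1},$$
where $\delta_n=1-\sqrt{1-(2n+3)^{-2}}\le(2n+3)^{-2}$. This is bounded as $\rho\to1^-$, so $f\in A^2(\disk,wdA)$ by Fatou. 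The identical computation for the tail $f-S_Nf$ (monotone convergence in $\rho$ justifies the Abel limit, and the formal term-by-term evaluation of the quadratic form is an $\infty-\infty$ expression that must not be rearranged) yields $\|f-S_Nf\|^2_{A^2(wdA)}=\pi+2\pi\sum_{n>N}\delta_n\to\pi\ne0$. So the Taylor sections of $f$ do not converge to $f$ in norm.

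For comparison, the paper's own proof passes from $\|f\|^2$ directly to $\sum_n\frac{|a_n|^2}{2(n+1)}\int_0^{2\pi}w(\theta)\,d\theta$, which tacitly uses $\int_0^{2\pi}e^{i(n-m)\theta}w(\theta)\,d\theta=0$ for $n\ne m$ --- precisely the orthogonality of the monomials that a non-constant angular weight destroys. Your write-up correctly isolates where that orthogonality is genuinely needed, which is why your proof stalls where it does. What survives is the ``in particular'' clause: polynomials \emph{are} dense in $A^2(\disk,wdA)$ for every integrable angular weight, by the dilation argument of Theorem \ref{angular-main} (since $w(z/r)=w(z)$) followed by uniform approximation of $f_\rho$ on $\overline{\disk}$ as in Theorem \ref{limsup}; that is exactly the first two thirds of your own argument with $S_N(f_\rho)$ replaced by an arbitrary polynomial close to $f_\rho$. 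If you want a positive statement about Taylor sections themselves, you need an additional hypothesis on $w$, for instance $0<c\le w\le C$ (which makes the norm equivalent to the unweighted one) or an $A_2$/Bekoll\'e--Bonami-type condition.
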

\begin{proof}
Let $f(z)=\sum_{n=0}^\infty a_nz^n$ be a function in $A^2(\disk, wdA)$. Note that
\begin{align*}\|f\|^2_{A^2(\disk, wdA)}&=\int_0^1\int_0^{2\pi}\sum_{n=0}^{\infty}|a_n|^2r^{2n+1}w(\theta)drd\theta\\
&=\sum_{n=0}^{\infty}\frac{|a_n|^2}{2(n+1)}\int_0^{2\pi}w(\theta)d\theta.\end{align*}
This implies that for $p_k(z)=\sum_{n=0}^{k}a_nz^n$ we have
$$\|f-p_k \|^2_{A^2(\disk, wdA)}=\left(\int_0^{2\pi}w(\theta)d\theta\right)\sum_{n=k+1}^{\infty}\frac{|a_n|^2}{2(n+1)}.$$
Now, we let $k$ tend to infinity, so that $p_k\to f$ in the norm.
\end{proof}

This result can be generalized in the following way.
\begin{theorem}\label{angular-main}
Let $w$ be an angular weight function satisfying
$$\int_0^{2\pi}w(\theta)d\theta <\infty.$$
Then the polynomials are dense in $A^p(\disk, wdA),\, 0<p<\infty$.
\end{theorem}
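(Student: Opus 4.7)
The plan is to reduce Theorem \ref{angular-main} to a direct application of Theorem \ref{limsup}, by checking that an angular weight automatically satisfies both hypotheses of that theorem.

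First I would verify that $d\mu(z)=w(z)dA(z)$ is a finite measure on $\disk$. Writing $z=re^{i\theta}$ in polar coordinates, the hypothesis $\int_0^{2\pi}w(\theta)d\theta<\infty$ gives
\begin{equation*}
\int_\disk w(z)\,dA(z)=\int_0^1\int_0^{2\pi}w(\theta)\,r\,d\theta\,dr=\tfrac{1}{2}\int_0^{2\pi}w(\theta)\,d\theta<\infty,
\end{equation*}
so $\mu$ is a finite positive measure on the unit disk, as required.

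Next I would check condition (\ref{con-main}). The crucial observation is that if $z=\rho e^{i\theta}\in r\disk$, then $z/r=(\rho/r)e^{i\theta}$ has the \emph{same} argument as $z$. Since $w$ depends only on the argument, $w(z/r)=w(\theta)=w(z)$. Hence, taking $k=0$ and $C=1$,
\begin{equation*}
r^{0}w\!\left(\frac{z}{r}\right)=w(z),\quad |z|<r,\ 0<r<1,
\end{equation*}
so condition (\ref{con-main}) holds trivially. Theorem \ref{limsup} then yields the density of polynomials in $A^p(\disk,wdA)$ for every $0<p<\infty$.

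There is essentially no obstacle here: the whole content of the statement is the remark that angular weights are invariant under radial dilations in the sense that $w(z/r)=w(z)$, which instantly places them within the scope of Theorem \ref{limsup}. The only minor point worth stating explicitly is the invariance $w(z/r)=w(z)$, which separates the present argument from the $p=2$ case handled in Proposition \ref{angular} by orthogonality of monomials on each circle.
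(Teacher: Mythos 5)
Your proposal is correct and is essentially the paper's own argument: the paper likewise performs the change of variables, observes that $w(z/r)=w(z)$ because dilation preserves the argument, and concludes via the $\limsup$ criterion of Theorem \ref{limsup}. Your version is slightly cleaner in that it explicitly verifies condition (\ref{con-main}) with $k=0$, $C=1$ and the finiteness of $\mu$, then cites Theorem \ref{limsup} directly rather than re-deriving the key inequality inline.
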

\begin{proof}
Since $w$ is angular, we see that
\begin{align*}\|f_r\|^p_{A^p(\disk, wdA)}&=\int_{\disk}|f_r(z)|^pw(z)dA(z)\\
&= \frac{1}{r^2}\int_{r\disk}|f(z)|^p w(\frac{z}{r})dA(z)\\
&=\frac{1}{r^2}\int_{r\disk}|f(z)|^p w(z)dA(z).
\end{align*}
Therefore,
$$\limsup_{r\to 1^-}\int_{\disk}|f_r(z)|^pw(\theta)dA(z)= \int_{\disk}|f(z)|^pw(z)dA(z),$$
from which the result follows.
\end{proof}

This theorem can be generalized to the Dirichlet spaces, as well as to the analytic Besov spaces (we skip the details). Instead, we consider weights that are multiples of a radial weight and an angular weight.
\begin{proposition}\label{radial-angular}
Let $w(re^{i\theta})=\omega(r)v(\theta)$ where $\omega$ and $v$ are two weights satisfying
$$\int_0^{1}r\omega(r)dr <\infty,\quad \int_0^{2\pi}v(\theta)d\theta <\infty.$$
Then the Taylor polynomials of each function in $A^2(\disk, wdA)$ tend to the same function in norm. In particular, the polynomials are dense
in $A^2(\disk, wdA)$.
\end{proposition}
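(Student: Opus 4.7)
My approach would parallel the proof of Proposition \ref{angular}. First I would write $f(z)=\sum_{n=0}^\infty a_n z^n$ and let $p_k(z)=\sum_{n=0}^k a_n z^n$ denote the Taylor polynomials. Using polar coordinates together with the product factorization $w(re^{i\theta})=\omega(r)v(\theta)$, the norm becomes
$$\|f\|^2_{A^2(\disk, wdA)} = \int_0^1 \int_0^{2\pi} |f(re^{i\theta})|^2 \omega(r)\,v(\theta)\, r\, d\theta\, dr.$$

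Next I would expand $|f(re^{i\theta})|^2$ via the Taylor series and apply Fubini's theorem to separate the radial and angular integrations, exactly as in the proof of Proposition \ref{angular}. This should yield
$$\|f\|^2_{A^2(\disk, wdA)} = \sum_{n=0}^\infty |a_n|^2 \left(\int_0^1 r^{2n+1}\omega(r)\,dr\right)\left(\int_0^{2\pi} v(\theta)\,d\theta\right).$$
Both factors are finite by the hypotheses on $\omega$ and $v$, so the series on the right converges. Performing the same computation for $\|f-p_k\|^2$ gives
$$\|f-p_k\|^2_{A^2(\disk, wdA)} = \left(\int_0^{2\pi} v(\theta)\,d\theta\right)\sum_{n=k+1}^\infty |a_n|^2\int_0^1 r^{2n+1}\omega(r)\,dr,$$
which tends to $0$ as $k\to\infty$ as the tail of a convergent series. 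This will establish convergence of the Taylor polynomials in norm and, in particular, density of polynomials in $A^2(\disk, wdA)$.

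The step I expect to require the most care is the separation of the double integral into a product of radial and angular parts; this is the same step that underpins the proof of Proposition \ref{angular}, and the argument should carry over since the radial integration is performed against $\omega(r)r\,dr$ while the angular integration is performed against $v(\theta)\,d\theta$. Once that identity is in place, the remainder of the proof is a routine tail estimate for a convergent positive series, requiring no further ingredients beyond Fubini's theorem and the two integrability hypotheses.
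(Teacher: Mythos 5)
Your proposal reproduces the paper's own proof essentially verbatim, so you have correctly identified the intended argument. Unfortunately, the key identity on which both you and the printed proof rely, namely
$$\|f\|^2_{A^2(\disk,wdA)}=\left(\int_0^{2\pi}v(\theta)\,d\theta\right)\sum_{n=0}^{\infty}|a_n|^2\int_0^1 r^{2n+1}\omega(r)\,dr,$$
is not valid for a non-constant angular factor $v$. Expanding $|f(re^{i\theta})|^2=\sum_{n,m}a_n\overline{a_m}\,r^{n+m}e^{i(n-m)\theta}$ and integrating term by term (which is what Fubini actually licenses), the off-diagonal terms are paired with $\int_0^{2\pi}e^{i(n-m)\theta}v(\theta)\,d\theta$, i.e.\ with the Fourier coefficients of $v$; these vanish for all $n\neq m$ only when $v$ is constant a.e. So the step you yourself single out as requiring the most care --- the separation of the double integral into a product of radial and angular parts, one Taylor coefficient at a time --- is exactly the step that fails: Fubini separates the variables but does not kill the cross terms. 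A concrete check: with $\omega\equiv 1$, $v(\theta)=1+\cos\theta$ and $f(z)=1+z$, a direct computation gives $\|f\|^2_{A^2(\disk,wdA)}=\tfrac{13\pi}{6}$, while the separated formula predicts $\tfrac{3\pi}{2}=\tfrac{9\pi}{6}$; the missing $\tfrac{2\pi}{3}$ is the $n=0$, $m=1$ cross term.

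To be fair, this gap is inherited from the paper: the same orthogonality step is used (and fails for the same reason) in the proof of Proposition \ref{angular}. As written, neither argument establishes norm convergence of the Taylor polynomials, because $\|f-p_k\|^2$ is not a tail of a positive series but a full bilinear form in the coefficients $a_{k+1},a_{k+2},\dots$ together with cross terms. A correct proof needs an extra ingredient: for instance, if $v$ is bounded above and below by positive constants, then $\|\cdot\|_{A^2(\disk,wdA)}$ is equivalent to the norm with the radial weight $\omega(r)$ and the classical radial theory applies; without such a hypothesis one must control the cross-term form $\sum_{n\neq m}a_n\overline{a_m}\,\widehat{v}(m-n)\int_0^1 r^{n+m+1}\omega(r)\,dr$ directly.
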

\begin{proof}
As in the proof of the Proposition \ref{angular}, we have
\begin{align*}\|f\|^2_{A^2(\disk, wdA)}&=\int_0^1\int_0^{2\pi}\sum_{n=0}^{\infty}|a_n|^2r^{2n+1}\omega(r)v(\theta)drd\theta\\
&=\left( \int_0^{2\pi}v(\theta)d\theta\right )\sum_{n=0}^{\infty}|a_n|^2 \omega_n,\end{align*}
where
$$\omega_n=\int_{0}^{1}r^{2n+1}\omega(r)dr\le\int_{0}^{1}r\omega(r)dr<\infty.$$
As before, we have
$$\|f-p_k \|^2_{A^2(\disk, wdA)}=\left( \int_0^{2\pi}v(\theta)d\theta\right )\sum_{n=k+1}^{\infty}|a_n|^2\omega_n,$$
which tends to zero as $k\to\infty$, since the sequence $\omega_n$ is bounded by $\omega_0$.
\end{proof}
\begin{theorem}\label{radial-angular-main}
Let $w(se^{i\theta})=\omega(s)v(\theta)$ where $\omega$ and $v$ are two weights satisfying
$$\int_0^{1}s\omega(s)ds <\infty,\quad \int_0^{2\pi}v(\theta)d\theta <\infty,$$
and $r^k\omega(s/r)\le C\omega(s)$ for some integer $k\ge 0$.
Then the polynomials are dense in the weighted Bergman space $A^p(\disk, wdA),\, 0<p<\infty$.
\end{theorem}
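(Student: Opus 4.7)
The plan is to observe that the hypotheses of this theorem imply the hypotheses of Theorem \ref{limsup} for the two-dimensional weight $w(z) = \omega(|z|)v(\arg z)$, so that the conclusion follows as a direct corollary. The key point is that the dilation $z \mapsto z/r$ used throughout this paper acts only on the modulus and leaves the argument fixed (since $r\in(0,1)$ is a positive real number), so the angular factor of $w$ is inert from the viewpoint of the change-of-variables argument.

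First I would verify that $d\mu = w\,dA$ defines a finite measure on $\disk$. Passing to polar coordinates and applying Fubini's theorem,
$$\int_\disk w(z)\,dA(z) = \left(\int_0^{2\pi} v(\theta)\,d\theta\right)\left(\int_0^1 s\omega(s)\,ds\right),$$
which is finite by the two integrability hypotheses on $\omega$ and $v$.

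Next I would check that the one-dimensional bound $r^k\omega(s/r) \le C\omega(s)$ lifts to the two-dimensional condition \eqref{con-main}. For $|z|<r$ with $r_0\le r<1$, the identity $\arg(z/r) = \arg z$ yields
$$r^k w(z/r) = r^k \omega(|z|/r)\,v(\arg z) \le C\,\omega(|z|)\,v(\arg z) = Cw(z),$$
which is exactly the hypothesis of Theorem \ref{limsup}. Invoking that theorem then gives the density of polynomials in $A^p(\disk, wdA)$ for every $0<p<\infty$.

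I do not expect a substantial obstacle: the whole argument is a reduction to Theorem \ref{limsup}, and the only subtlety is the behaviour of the angular weight under the dilation, which is free because positive real dilations preserve arguments. An alternative route would be to imitate the proof of Proposition \ref{radial-angular} directly by splitting the integral into radial and angular pieces and then applying the dilation argument only to the radial part; this would work for $p=2$ and general $\omega$, but the reduction to Theorem \ref{limsup} handles all $p\in(0,\infty)$ uniformly and is strictly shorter.
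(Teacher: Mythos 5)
Your proposal is correct and follows essentially the same route as the paper: the paper likewise performs the change of variables $z\mapsto z/r$, notes that the angular factor $v(\theta)$ is unaffected because the dilation preserves arguments, applies the hypothesis $r^k\omega(s/r)\le C\omega(s)$ to obtain the key inequality $\limsup_{r\to 1^-}\|f_r\|^p\le\|f\|^p$, and then concludes as in Theorem \ref{limsup}. Your packaging of this as an explicit verification of the hypotheses of Theorem \ref{limsup} (including the Fubini check that $\mu$ is finite) is a slightly cleaner presentation of the identical argument.
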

\begin{proof}
Again, we see that for $z=se^{i\theta}$,
\begin{align*}\|f_r\|^p_{A^p(\disk, wdA)}&=\int_{\disk}|f_r(z)|^p\omega(s)v(\theta)dA(z)\\
&= \frac{1}{r^{k+2}}\int_{r\disk}|f(z)|^p r^k\omega(s/r)v(\theta)dA(z)\\
&\le \frac{C}{r^{k+2}}\int_{r\disk}|f(z)|^p \omega(s)v(\theta)dA(z).\\
\end{align*}
Therefore,
$$\limsup_{r\to 1^-}\int_{\disk}|f_r(z)|^p\omega(s)v(\theta)dA(z)\le \int_{\disk}|f(z)|^p\omega(s)v(\theta)dA(z),$$
from which the result follows.
\end{proof}

This result holds for the Dirichlet spaces and for the analytic Besov spaces. We omit the details.

\section {Carath\'{e}odory domains}
Let $G$ be a bounded simply connected Jordan domain in the complex plane. This means that $G$ is a bounded region whose boundary is a simple Jordan curve $C$. This curve divides the extended complex plane into a bounded region (the interior of curve) and an unbounded region (the exterior of curve). $C$ is the joint boundary of this two regions. \par
Let $w:G\to [0,\infty)$ be a continuous function with the property that
$$\int_G w(z)dA(z)<\infty.$$
Again, we set $d\mu(z)=w(z)dA(z)$ where $dA(z)$ is the usual area measure on $G$. A function $f$ analytic on $G$ belongs to the Bergman space $A^p(G,d\mu)$ if
$$\int_G |f(z)|^p d\mu(z)<\infty.$$
The above assumption on $w$ implies that the constant function $1$ and all the analytic polynomials belong to $A^p(G,d\mu)$. The question is under what conditions on $w$ can we find polynomials to approximate a given function in $A^p(G,d\mu)$. Note that, due to the lack of symmetry in the domain $G$, the dilated functions cannot be used. We will use the method due to Torsten Carleman \cite{car} to find the approximating polynomials. This method was already employed by
O. J. Farrell in \cite{far1} and \cite{far2} to prove the density of polynomials in the Bergman spaces $A^p(G,dA)$ where the weight function was identically $1$.
\begin{theorem}\label{region}
Let $G$ be a bounded simply connected Jordan domain in the complex plane, and $w$ be a weight function on $G$ such that $\int_G w(z)dA(z)$ is finite.
Let $0<p<\infty$, and $f\in A^p(G, wdA)$. Then there is a sequence of polynomials $p_n$ such that $p_n\to f$ uniformly on compact subsets of $G$, and
$$\lim_{n\to\infty}\int_G |p_n(z)-f(z)|^pw(z)dA(z)=0.$$
\end{theorem}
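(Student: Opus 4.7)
The plan is to follow the Carleman--Farrell scheme adapted to the weighted setting. Because $G$ is a Jordan domain, $\widehat{\mathbb C}\setminus\bar G$ is connected, so Mergelyan's theorem guarantees that every function continuous on $\bar G$ and analytic in $G$ is a uniform limit on $\bar G$ of polynomials. Coupled with the hypothesis $\int_G w\,dA<\infty$, uniform convergence on $\bar G$ yields convergence in the $A^p(G,w\,dA)$-norm. The proof therefore reduces to producing a family of approximants $\tilde f_r\in C(\bar G)\cap\mathcal O(G)$ with $\tilde f_r\to f$ in $A^p(G,w\,dA)$ as $r\to 1^-$; applying Mergelyan to each $\tilde f_r$ and extracting a diagonal subsequence then delivers the polynomials $p_n$.

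For the approximants I would invoke the Osgood--Carath\'eodory theorem: the Riemann map $\varphi:\disk\to G$ extends to a homeomorphism $\bar\disk\to\bar G$ with continuous inverse $\psi:\bar G\to\bar\disk$. For $0<r<1$, set
\[
\tilde f_r(z):=f\bigl(\varphi(r\,\psi(z))\bigr),\qquad z\in\bar G.
\]
Since $\varphi(r\bar\disk)\subset G$ is compact, $\tilde f_r$ is continuous on $\bar G$, analytic in $G$, and $\tilde f_r\to f$ locally uniformly on $G$. The change of variables $\zeta=\psi(z)$, with $dA(z)=|\varphi'(\zeta)|^2\,dA(\zeta)$, rewrites $\|\tilde f_r-f\|^p_{A^p(G,w\,dA)}$ as $\int_\disk|F(r\zeta)-F(\zeta)|^p\,\tilde w(\zeta)\,dA(\zeta)$, where $F=f\circ\varphi$ and $\tilde w(\zeta)=w(\varphi(\zeta))|\varphi'(\zeta)|^2$. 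Thus the problem is reduced to dilation-convergence in $A^p(\disk,\tilde w\,dA)$, which is precisely the conclusion of Theorem \ref{limsup} once $\tilde w$ satisfies condition (\ref{con-main}).

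The main obstacle is that the bare continuity and integrability of $w$ need not force (\ref{con-main}) on the transported weight $\tilde w$, as $\varphi'$ may behave wildly near $\partial\disk$. To bypass this I would replay the Egorov/Fatou scheme of Theorem \ref{limsup} intrinsically on $G$: decompose $\int_G|\tilde f_r-f|^p w\,dA$ into contributions from a compact $K\subset G$, on which the family $\{\tilde f_r\}$ is uniformly bounded and hence (by Montel together with the pointwise limit) converges uniformly, and its complement, on which the finiteness of $d\mu:=w\,dA$, the pointwise convergence $\tilde f_r\to f$, and the integrability of $|f|^p w$ combine via Egorov's theorem to control the tail exactly as in display (\ref{2max}). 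The required uniform convergence of $p_n$ on compact subsets of $G$ then follows from norm convergence through the standard mean-value inequality for analytic functions.
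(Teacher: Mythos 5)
Your construction of the approximants is genuinely different from the paper's: you transport the radial dilation to $G$ through the boundary extension of the Riemann map, setting $\tilde f_r=f\circ\varphi\circ(r\,\cdot)\circ\psi$, whereas the paper follows Carleman and Farrell, taking a decreasing chain $G_1\supset G_2\supset\cdots\supset\overline{G}$ with $\bigcap_n G_n=G$, conformal maps $\psi_n:G_n\to G$, and approximants $F_n=(f\circ\psi_n)(\psi_n')^{2/p}$ that are analytic on a neighborhood of $\overline{G}$; the Jacobian factor $(\psi_n')^{2/p}$ is what lets the change of variables send $\int_G|F_n|^p\,d\mu$ to an integral of $|f|^p$ over the smaller region $\psi_n(G)$. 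Both routes end with Mergelyan on $\overline{G}$, so the disagreement is only in where the continuous-up-to-the-boundary approximants come from.

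However, your argument has a genuine gap, and it sits exactly at the point you flag and then wave past. The Egorov/Fatou scheme of Theorem \ref{limsup} does not run on ``finiteness of $\mu$, pointwise convergence, and integrability of $|f|^pw$'' alone: the step that tames the tail in display (\ref{2max}) is (\ref{eq-lim-E}), and (\ref{eq-lim-E}) is deduced from the inequality $\limsup_{r\to1^-}\int|f_r|^p\,d\mu\le\int|f|^p\,d\mu$, which is precisely what hypothesis (\ref{con-main}) buys. Egorov gives uniform convergence off a set of small measure, and Fatou gives only a \emph{lower} bound for $\liminf_r\int_E|\tilde f_r|^p\,d\mu$; neither yields an upper bound for $\int_{G\setminus K}|\tilde f_r|^p\,d\mu$ that is uniform in $r$. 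After your change of variables the missing ingredient is exactly $\limsup_{r\to1^-}\int_\disk|F(r\zeta)|^p\tilde w(\zeta)\,dA(\zeta)\le\int_\disk|F|^p\tilde w\,dA$ with $\tilde w=(w\circ\varphi)|\varphi'|^2$, and, as you yourself observe, nothing in the hypotheses forces $\tilde w$ to satisfy (\ref{con-main}) or any substitute. So the reduction to the disk returns you to the very problem that the hypotheses of Theorem \ref{limsup} were introduced to solve, and your final paragraph does not solve it. To repair the argument you would need either an additional hypothesis on $w$, or a proof that the transported weight of a Jordan domain always obeys a (\ref{con-main})-type bound --- which is not true in general. (The paper's own tail estimate is also terse, but the mechanism it leans on --- the $(\psi_n')^{2/p}$ factor and maps from \emph{larger} domains --- is not the one you are using, so you cannot simply cite it.)
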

\begin{proof}
Let $d\mu(z)=w(z)dA(z)$, and
let $G_1\supset G_2\supset G_3\supset \cdots \supset \overline{G}$ be a decreasing sequence of simply connected bounded domains such that $G=\cap_{n=1}^\infty G_n$. Let $\psi_n:G_n\to G$ be the conformal mapping that keeps a fixed point $z_0\in G$ and a fixed direction at $z_0$ invariant.
It is known that $\psi_n(z)\to z$, and $\psi_n^\prime(z)\to 1$; both sequences converge uniformly on compact subsets of $G$.
Let
$$F_n(z)=f(\psi_n(z))(\psi_n^\prime(z))^{2/p},\quad n\ge 1.$$
Each $F_n$ is analytic in $\overline{G}$, and $F_n\to f$ uniformly on compact subsets of $G$.
Note that each $F_n$ can be uniformly approximated by polynomials in $\overline{G}$, and $F_n\to f$ uniformly on compact subsets of $G$. Now, approximate $f$ by $F_n$ and $F_n$ by a polynomial $p_n$; therefore we can find a sequence of polynomials $p_n$ such that $p_n\to f$ uniformly on compact subsets of $G$.
Since both $|f|^p$ and $|F_n|^p$ (for fixed $n$) have finite integral on $G$, we can choose a compact set $K$ in $G$ such that the integrals of
$|f|^p$ and $|F_n|^p$ with respect to $d\mu$ on $G\setminus K$ are sufficiently small (less than $\epsilon$).
Since $p_n$ approximates $F_n$ uniformly on $\overline{G}$,
$$\left |\int_G |p_n|^pd\mu(z)-\int_G |F_n|^pd\mu(z)\right | <\epsilon \mu(G).$$
Now write
$$\left |\int_G |p_n|^pd\mu(z)-\int_G |f|^pd\mu(z)\right | \le \epsilon \mu(G)+
\left | \int_G |F_n|^pd\mu(z)-\int_G |f|^pd\mu(z)\right |.$$
We now choose a compact set $K$ in such a way that the integrals on its complement are small, so that
$$\left | \int_G |F_n|^pd\mu(z)-\int_G |f|^pd\mu(z)\right |\le 2\epsilon +
\left | \int_K |F_n|^pd\mu(z)-\int_K |f|^pd\mu(z)\right |.$$
Again, we use that the convergence on $K$ is uniform, so that
$$\left | \int_K |F_n|^pd\mu(z)-\int_K |f|^pd\mu(z)\right |\le \epsilon \mu(G).$$
In sum, we obtain
$$\lim_{n\to\infty}\int_G |p_n(z)|^pd\mu(z)=\int_G |f(z)|^pd\mu(z).$$
By an argument similar to the one used in the proof of Theorem \ref{limsup}, we conclude that
$$\lim_{n\to\infty}\int_G |p_n(z)-f(z)|^p d\mu (z)=0.$$
\end{proof}



\end{document}